\newcommand{\TheTitle}{Efficient treatment of bilinear forms in global optimization}
\newcommand{\TheAuthors}{Marcia Fampa \and Jon Lee}
\title{{\TheTitle}}
\title{{Efficient treatment of bilinear forms\\ in global optimization}\thanks{Submitted to the editors \today. \funding{Supported in part by ONR grant N00014-17-1-2296.
Additionally, part of this work was done while J. Lee was visiting the
Simons Institute for the Theory of Computing. It was partially supported by the
DIMACS/Simons Collaboration on Bridging Continuous and Discrete Optimization
through NSF grant \#CCF-1740425. M. Fampa was partially supported by CNPq grant 303898/2016-0.}}}
\author{
  Marcia Fampa\thanks{Universidade Federal do Rio de Janeiro (\email{fampa@cos.ufrj.br})}
  \and
  Jon Lee\thanks{University of Michigan, Ann Arbor, MI (\email{jonxlee@umich.edu})}
  }
\begin{document}

\maketitle

\begin{abstract}
We efficiently treat bilinear forms
in the context of global optimization, by applying McCormick convexification and by extending an approach of Saxena, Bonami and Lee for symmetric
quadratic forms to bilinear forms. A key application of our work is in treating ``structural convexity'' in a symmetric quadratic form.
\end{abstract}

\begin{keywords}
global optimization, quadratic, bilinear, McCormick inequalities, convexification, disjunctive cuts
\end{keywords}

\begin{AMS}
  90C26
\end{AMS}

%

\section*{Introduction}\label{sec:intro}

There is a huge literature on applications of and techniques for
handling bilinear functions; see, for example,
\cite{MR1},
\cite{MR2},
\cite{MR3},
\cite{MR4},
\cite{MR5},
\cite{MR6},
\cite{MR7},
\cite{MR8},
\cite{MR9},
\cite{MR10},
\cite{MR11},
\cite{MR12}.
We do not make any attempt to survey this vast literature.
Rather, our interest is in seeing how certain techniques
aimed at handling symmetric quadratic forms can naturally and
effectively be extended to handle bilinear forms.

We consider bilinear forms $s(x,y):=x'Ay$,
with $x\in \mathbb{R}^n$, $y\in \mathbb{R}^m$, and
$A\in \mathbb{R}^{n \times m}$.
We assume that we have or can derive reasonable box constraints on $x$ and $y$,  ${\bf a}_{x} \leq x \leq {\bf b}_{x}$ and ${\bf a}_{y} \leq y \leq {\bf b}_{y}$.

Additionally,
we will see that it is useful to also consider
symmetric quadratic forms $q(x) := x'Qx$
and $r(y):= y'Ry$, with
symmetric $Q\in \mathbb{R}^{n \times n}$ and
symmetric $R\in \mathbb{R}^{m \times m}$.
Eventually we will mostly focus
on $q(x)$, with the understanding
that whatever we say there applies also to $r(y)$.


 We can see our bilinear form as
\[
s(x,y) = \left\langle S,xy'\right\rangle
\]
and
our symmmetric quadratic forms as
\[
q(x) = \left\langle Q,xx'\right\rangle \mbox{ and }
r(y) = \left\langle R,yy'\right\rangle~.
\]

 Now,
 we can lift to matrix space via the defining equations:
\begin{align}
\label{EW}
&W = xy'~;\\
\label{EX}
&X =   xx'~;\\
\label{EY}
&Y =   yy'~.
\end{align}
The symmetric lifting of $xx'$ and $yy'$ is completely standard,
but the non-symmetric lifting of $xy'$ is not typically carried out.

So we could model $s(x,y)+q(x)+r(y)$ as
 \[
 \langle A,W \rangle +  \langle Q,X \rangle +  \langle R,Y \rangle~,
 \]
  and focus on relaxing  \eqref{EW}, \eqref{EX} and \eqref{EY}
  in various ways.
  Similarly to \cite{FLL_CLAIO}, we could re-express $s(x,y)+q(x)+r(y)$ as
the symmetric quadratic form
\begin{align}\label{symmetrize}
		\left(x',y'\right)\left(\begin{array}{cc}
										   Q & \frac{1}{2} A \\
										   \frac{1}{2} A' & R
										   \end{array}\right)
								            \left(\begin{array}{c}
								            	   x \\
									   	   y
										   \end{array}\right).
	\end{align}
So in a formal sense, we can mathematically reduce treatment of
$s(x,y)+q(x)+r(y)$ to that of symmetric quadratic forms,
applying then the many ideas for such functions; for example,
McCormick convexification, convex relaxation via semidefinite programming (and further quadratic and linear relaxation) together with disjunctive cuts (see \cite{SBL2008}, \cite{MIQCP} and \cite{MIQCPproj}), and d.c. programming (see \cite{MR3685193}). Following this way of
thinking though, we can go the other way around. That is, given a
symmetric quadratic form $z'Tz$, with symmetric $T\in \mathbb{R}^{p \times p}$, we might seek to partition $z$ and $T$, re-writing $z'Tz$ as
\begin{align}\label{decompT}
		\left(x',y'\right)\left(\begin{array}{cc}
										   T_{11} & T_{12} \\
										   T_{12}' & T_{22}
										   \end{array}\right)
								            \left(\begin{array}{c}
								            	   x \\
									   	   y
										   \end{array}\right).
	\end{align}
Then, letting $Q:=T_{11}$~, $R:=T_{22}$~, and $A:=2T_{12}$, we get
$s(x,y)+q(x)+r(y)=z'Tz$~. Now, what could be the advantage of
viewing $z'Tz$ this way? The answer is in how we choose the partitioning
and whether we have good convexification methods related to the partitioning.
Consider a situation in which we can do this, either via a heuristic or via knowledge of the model where $T$ arose, so that $Q=T_{11}$ is large and \emph{positive semidefinite}. Further assume that our goal is to minimize a function having $z'Tz$ as a summand (or as a summand on the
left-hand side of a constraint $\cdots \leq 0$). In such a common scenario,
we need a convex under-estimator for summands, and
$q(x)$ \emph{is its own convex under-estimator}. So, following the
philosophy of \cite{MR3685193} (and other works employing d.c. programming),
we extract convexity from our functions and work to lower bound the remaining
non-convexity. In  \cite{MR3685193}, we assume that the convexity in $z'Tz$ is
\emph{completely hidden}, and we expose it algebraically (using an eigen-decomposition) by writing $T$ as the difference of
positive semi-definite matrices, $T=Q_1-Q_2$~. If $T$ has few negative
eigenvalues, then the concave quadratic $-z'Q_2z$ may be effectively treated (see \cite{MR3685193}). But it may be that there is instead ``structural convexity'' in the sense that the matrix $T$ may have a \emph{large principal sub-matrix that
is positive semi-definite}. In such a situation, it may be preferable to
decompose $T$ as \eqref{decompT}, and then view
$z'Tz$ as $s(x,y)+q(x)+r(y)$, with $Q:=T_{11}$~, $R:=T_{22}$~, and $A:=2T_{12}$~.
In this way, we have $q(x)$ as its own convex under-estimator,
and if $n$ is large, we are left with handling the bilinear $s(x,y)$,
which we will approach using methods to be presented in
\S\ref{sec:Saxena2} and \S\ref{sec:McInstead}, and the small symmetric quadratic form $r(y)$,
which we can treat with any of the available techniques for that.
In a similar vein, perhaps rather we can find a partitioning
in which $Q:=T_{11}$ and $R:=T_{22}$ both have few negative eigenvalues;
then, we might expect that we can take advantage of the ``near convexity''
of $q(x)$ and $r(y)$ (using perhaps again techniques from  \cite{MR3685193}).

For a more specific application,
we may consider the matrix equation $HAH=H$,
with data $A\in\mathbb{R}^{m\times n}$ and variable
$H\in\mathbb{R}^{n\times m}$ (see \cite{FLL_CLAIO}
for an application involving sparse pseudo-inverses).
We can see this equation as $h_{i\cdot}Ah_{\cdot j} = h_{i,j}$~,
for all $i,j$~. Or, equivalently, as the ``mixed quadratic form''
\[
\underbrace{a_{i,j} h_{j,i}^2}_{1\times 1 \mbox{ \tiny  sym. quad}}
+ \underbrace{\sum_{k:k\not=j} \sum_{\ell:\ell\not=i} a_{\ell,k} h_{k,i} h_{j,\ell}}_{ (m-1)\times (n-1) \mbox{ \tiny  bilinear}}
+  \underbrace{\sum_{k:k\not=j}a_{i,k} h_{k,i} h_{j,i}}_{ 1\times (n-1) \mbox{ \tiny bilinear}}
+  \underbrace{\sum_{\ell:\ell\not=i} a_{\ell,j} h_{j,i} h_{j,\ell}}_{ (m-1)\times 1 \mbox{ \tiny bilinear}}~,
\]
for all $i,j$~.

\section{Convexification}\label{sec:mixing}

We can relax \eqref{EW}, \eqref{EX}, and
\eqref{EY} first by standard techniques:
\begin{itemize}
\item We can employ the semi-definite inequalities
\begin{equation}\label{sd}
X\succeq xx' \mbox{ and } Y\succeq yy'~.
\end{equation}
\item We can employ McCormick inequalities derived from
the box constraints and the bilinear terms $W_{ij}=x_iy_j$~,
$X_{ij}=x_i x_j$~, and $Y_{ij}=y_i y_j$~.
\end{itemize}

Now, suppose that we have a solution $\hat{x},\hat{y},\hat{W},\hat{X},\hat{Y}$ to a relaxation. We can consider a singular-value decomposition of
$\hat{W}-\hat{x}\hat{y}'$:
\begin{equation*}
U'(\hat{W}-\hat{x}\hat{y}')V=\Sigma~.
\end{equation*}
and an eigen-value decomposition of
$\hat{X}-\hat{x}\hat{x}'$:
\begin{equation*}
Z'\left(
\hat{X}-\hat{x}\hat{x}'
\right)Z
=\Lambda~,
\end{equation*}
Also, we would employ an eigen-value decomposition of
$\hat{Y}-\hat{y}\hat{y}'$~, but at this point we
trust the reader to see that we treat $\hat{y},\hat{Y}$
in exactly the same manner as $\hat{x},\hat{X}$~.

\subsection{Treating symmetric quadratic forms via Saxena et al.}\label{sec:Saxena1}
A violation of \eqref{EX} means a non-zero eigen-value
(in $\Lambda$). We assume that our relaxation
includes \eqref{sd}, so we can assume that
we have a positive eigen-value $\lambda$, with associated eigen-vector $z$.
That is,
\begin{equation*}
z'(\hat{X}-\hat{x}\hat{x}')z=\lambda > 0,
\end{equation*}
which motivates us to look at the \emph{concave}
inequality
\begin{equation*}
\left\langle z z', X \right\rangle - (z'x)^2 \leq 0,
\end{equation*}
which is violated by
$\hat{x},\hat{X}$.
Defining $p:=z'x$
and $s:=\left\langle z z', X \right\rangle$,
we have the simple 2-dimensional concave quadratic inequality
\begin{equation*}
s - p^2 \leq 0.
\end{equation*}
Calculating bounds on $p$, we can then make a secant inequality
and apply disjunctive cuts as described in \cite{MIQCP} (also see
\cite{SBL2008,MIQCPproj}).

\subsection{Treating bilinear forms via symmetrization}\label{sec:symmetrizing}

We could re-express $s(x,y)=x'Ay$ as
the symmetric quadratic form
\begin{align}\label{ha_inprod}
		\left(x',y'\right)\left(\begin{array}{cc}
										   \textbf{0}_{n} & \frac{1}{2}A \\
										   \frac{1}{2}A' & \textbf{0}_{m}
										   \end{array}\right)
								            \left(\begin{array}{c}
								            	   x \\
									   	   y
										   \end{array}\right),
	\end{align}
but then we are dealing with an order $m+n$ matrix.
If our context really is in handling $s(x,y)+q(x)+r(y)$, and we are willing to deal with order $m+n$ symmetric matrices, we might as well then symmetrize all at once
via \eqref{symmetrize}.

\subsection{Treating bilinear forms by emulating Saxena et al.}\label{sec:Saxena2}

Next, we extend the idea of Saxena et al. to bilinear forms.

A violation of \eqref{EW} means that there is a non-zero singular-value $\sigma$ in $\Sigma$. So, for the associated columns $u$ of $U$ and $v$ of $V$, we have
\[
u'(\hat{W}-\hat{x}\hat{y}')v=\sigma \neq 0.
\]
This motivates to look at the violated valid equation
\begin{equation}
\label{svdconst}
\left\langle uv', W\right\rangle - (u'x)(v'y) = 0.
\end{equation}
We attack this by inducing separability via
\begin{equation}
\label{qjdefinition}
\begin{array}{l}
q_1:=(u'x + v'y)/2, \\
q_2:=(u'x - v'y)/2.
\end{array}
\end{equation}
So,
\[
\begin{array}{l}
u'x = q_1 + q_2, \\
v'y = q_1 - q_2,
\end{array}
\]
and then we have
\[
(u'x)(v'y) = q_1^2 - q_2^2.
\]
In this manner,
writing
\begin{equation}
\label{rdefinition}
r:=\left\langle u v', W \right\rangle,
\end{equation}
we replace \eqref{svdconst} with
\begin{eqnarray}
r - q_1^2 +q_2^2 \leq 0, \label{univ1} \\
-r + q_1^2 -q_2^2 \leq 0.  \label{univ2}
\end{eqnarray}
Then we can treat the quadratic terms of (\ref{univ1}-\ref{univ2}) via the technique of Saxena et al. \cite{MIQCP}. That is, (i) we either
leave the convex $+q_i^2$ terms as is or possibly linearize via lower-bounding tangents, and (ii)
we make secant inequalities and disjunctive cuts on the concave $-q_i^2$ terms, which requires first calculating
lower and upper bounds on the $q_i$. Note that we can either derive
bounds on the $q_i$ from the box constraints  on $x$ and $y$,
or we can get potentially better bounds by solving further (convex)
optimization problems.

Note that if we \emph{simultaneously} treat the two concave terms ($-q_i^2$) via the disjunctive technique of Saxena et al.,
we are led to a 4-way disjunction.


\subsection{Treating bilinear forms with McCormick}\label{sec:McInstead}

Another possible way of relaxing \eqref{svdconst} is to apply McCormick convexification. Let
\begin{equation}
\label{defining}
\left\{
\begin{array}{l}
s:=\left \langle uv', W \right \rangle ~,\\
p_1:=u'x~,\\
p_2:=v'y~.\\
\end{array}
\right.
\end{equation}
We first calculate bounds $[a_i,b_i]$, for $p_i$ ($i=1,2$). Then we carry out the associated McCormick relaxation of $s=p_1p_2$:
\begin{align}
		& s\leq b_2 p_1 + a_1 p_2 - a_1 b_2 \label{mc1} \tag{I.1}\\
		& s\leq a_2 p_1 + b_1 p_2 - a_2 b_1 \label{mc2} \tag{I.2}\\
		& s\geq a_2 p_1 + a_1 p_2 - a_1 a_2  \label{mc3} \tag{I.3}\\
		& s\geq b_2 p_1 + b_1 p_2 - b_1 b_2  \label{mc4} \tag{I.4}
\end{align}
Substituting back in \eqref{defining},
we obtain
\begin{align}
		& \left \langle W,uv' \right \rangle\leq b_2 u'x + a_1 v'y - a_1 b_2 \label{mc1p} \tag{$\mbox{I.1}'$}\\
		& \left \langle W,uv' \right \rangle\leq a_2 u'x + b_1 v'y - a_2 b_1 \label{mc2p} \tag{$\mbox{I.2}'$}\\
		& \left \langle W,uv' \right \rangle\geq a_2 u'x + a_1 v'y - a_1 a_2  \label{mc3p} \tag{$\mbox{I.3}'$}\\
		& \left \langle W,uv' \right \rangle\geq b_2 u'x + b_1 v'y - b_1 b_2  \label{mc4p} \tag{$\mbox{I.4}'$},
\end{align}
and
we can hope that these inequalities are violated by $\hat{W}$, $\hat{x}$, $\hat{y}$.

Backing up a bit to compare with the Saxena et al. treatment of symmetric quadratic forms, here we are relaxing $s=p_1p_2$. If $p_1=p_2=:p$ (the Saxena et al. case), then we have $s=p^2$, whereupon we can distinguish the two ``sides'':
\begin{align}
		& s\geq p^2  \label{conv1} \tag{convex}\\
		& s\leq p^2 \label{conc1} \tag{concave}
\end{align}
Then Saxena et al. use (i) the convex side directly (or a linearization of it), and (ii) disjunctive programming on the concave side.

The question now begs, can we take (\ref{mc1}--\ref{mc4})  and do disjunctive programming in some nice way? It is convenient to work with box domains, so we could pick $\eta_i$ in $[a_i,b_i]$, for $i=1,2$. Then we get four boxes, by pairing one of
\[
[a_1,\eta_1], \; [\eta_1,b_1],
\]
and one of
\[
[a_2,\eta_2], \; [\eta_2,b_2].
\]
For each box, we get a new McCormick convexification (in the spirit of \ref{mc1}--\ref{mc4}). And so, as in the technique of \S\ref{sec:Saxena2}, we have a 4-way disjunction to base a disjunctive cut upon.

\section{Experiments}\label{sec:exper}

To test and compare the different approaches discussed in this paper, we consider the global optimization problem
\begin{align*}\tag{I}
& \min\ s(x,y)+q(x)+r(y)~,  \\
&{\bf a}_{x} \leq x \leq {\bf b}_{x}~,\\
&{\bf a}_{y} \leq y \leq {\bf b}_{y}~,\\
&x\in \mathbb{R}^n \ y\in \mathbb{R}^m~,
\end{align*}
where $s(x,y):=x'Ay$, $q(x):=x'Qx$, and $r(y):=y'Ry$,
 with $A\in \mathbb{R}^{n \times m}$, and with
  $Q\in \mathbb{R}^{n \times n}$ and
 $R\in \mathbb{R}^{m \times m}$, both symmetric and positive semidefinite.

The reason for considering $Q$ and $R$ to be positive semidefinite in our numerical  experiments is our main goal of investigating how the different ways of relaxing the bilinear term, $s(x,y)$, compare to each other, while taking advantage of the remaining convexity in the objective function. In case the quadratic functions are not convex, however, we can also apply the standard symmetric lifting (\ref{EX}-\ref{EY}) to relax them, as mentioned above.

We used  a set of 64 randomly generated instances for our experiments. To investigate how, or if, our results are affected by different parameters, we have generated the instances in the following way: for each pair $(m,n)$ considered, 8 instances were generated. The matrix $A$ has density of 50\% on half of them and of 100\% on the other half. In each group of 4 instances where $A$ has the same density, we have the ranks of $Q$ and $R$ set to, respectively, $25,50,75$, and $100$ percent of $n$ and $m$.

To evaluate the quality of the bounds, we solved the instances with CPLEX. We used default settings in CPLEX
v12.7.1.0 with a time limit of 1800 seconds. Denoting the best solution value obtained by CPLEX by $\bar{z}$, we compute relative duality gaps for each of our methods by
$$\mbox{Relative Duality Gap}:=\frac{\bar{z}-lb}{|\bar{z}|}\times 100\%,$$ where $lb$ is the lower bound given by each of the four approaches described in the following, namely,   \textbf{S.Mc}, \textbf{B.Mc}, \textbf{B.Mc.Disj}, \textbf{B.Mc.ExtDisj}.

The numerical experiments were conducted on a 3.20 GHz Intel Xeon ES-2667 CPU, with 128 GB RAM,
running under Windows 2012R2. We coded our algorithms in Matlab R2017b, and used MOSEK 8.0 to solve all the quadratic and linear relaxations, and also the linear programs solved to generate cuts.

Initially, we  consider the two basic ideas discussed on how to handle the bilinear term in the objective function, which lead to the relaxations (\textbf{S.Mc}) and (\textbf{B.Mc}) introduced next.

\begin{itemize}
\item \textbf{S.Mc} (Symmetric McCormick):
For constructing the first relaxation of (I), we consider
the symmetric quadratic form \eqref{symmetrize}, and linearize the objective function of the problem.

Defining
\begin{align}\label{matsym}
		\Gamma:=\left(\begin{array}{cc}
										   Q & \frac{1}{2} A \\
										   \frac{1}{2} A' & R
										   \end{array}\right), \,\, a_h:=\left(\begin{array}{c}
								         a_x \\
									   	   a_y
										   \end{array}\right), \,\, b_h:=\left(\begin{array}{c}
								         b_x \\
									   	   b_y
										   \end{array}\right),
\end{align}								       														
using the symmetric lifting
 \begin{align}\label{symlift}
H= hh', \, \mbox{where} \, h:=\left(\begin{array}{c}
								         x \\
									   	   y
										   \end{array}\right),
\end{align}
and including the McCormick inequalities to relax the matrix equation $H=hh'$, we obtain the following standard linear programming relaxation of (I):
\begin{align*}\tag{\textbf{S.Mc}}
& \min\  \langle \Gamma,H \rangle~,  \\
& H_{i,j}-b_{h_j} h_i-a_{h_i} h_j+a_{h_i} b_{h_j} \leq 0~; & \forall i,j\in \{1,\ldots,n+m\}~, \\
& H_{i,j}-a_{h_j} h_i-b_{h_i} h_j+b_{h_i} a_{h_j} \leq  0~; &\forall i,j\in \{1,\ldots,n+m\}~, \\
& H_{i,j}-a_{h_j} h_i-a_{h_i} h_j+a_{h_i} a_{h_j} \geq  0~; &\forall i,j\in \{1,\ldots,n+m\}~, \\
& H_{i,j}-b_{h_j} h_i-b_{h_i} h_j+b_{h_i} b_{h_j} \geq  0~; &\forall i,j\in \{1,\ldots,n+m\}~, \\
&{\bf a}_{h} \leq h \leq {\bf b}_{h}~,\\
&h\in \mathbb{R}^{n+m}~, H\in \mathbb{R}^{(n+m)\times (n+m)}~.
\end{align*}

\item  \textbf{B.Mc} (Bilinear McCormick):
Our second relaxation is a  convex quadratic program, obtained by keeping the quadratic terms of the objective function of (I), and linearizing the bilinear term using the non-symmetric lifting \eqref{EW}. To relax the matrix equation $W=xy'$, we also use McCormick inequalities derived from the box constraints.
\begin{align*}\tag{\textbf{B.Mc}}
& \min\  \langle A,W \rangle+q(x)+r(y)~,  \\
& W_{i,j}-b_{y_j} x_i-a_{x_i} y_j+a_{x_i} b_{y_j} \leq 0~; & \forall i\in N~,~j\in M~, \\
& W_{i,j}-a_{y_j} x_i-b_{x_i} y_j+b_{x_i} a_{y_j} \leq  0~; &\forall i\in N~,~j\in M~, \\
& W_{i,j}-a_{y_j} x_i-a_{x_i} y_j+a_{x_i} a_{y_j} \geq  0~; &\forall i\in N~,~j\in M~, \\
& W_{i,j}-b_{y_j} x_i-b_{x_i} y_j+b_{x_i} b_{y_j} \geq  0~; &\forall i\in N~,~j\in M~, \\
&{\bf a}_{x} \leq x \leq {\bf b}_{x}~,\\
&{\bf a}_{y} \leq y \leq {\bf b}_{y}~,\\
&x\in \mathbb{R}^n~, \ y\in \mathbb{R}^m~, W\in \mathbb{R}^{n\times m}~.
\end{align*}

\end{itemize}

Table \ref{tab:tab1} presents the average relative duality gaps obtained when solving  (\textbf{S.Mc}) and (\textbf{B.Mc}), for each group of instances of the same size, and also the average computational time to solve them. For our instances, we see a clear advantage in using the convex quadratic relaxation (\textbf{B.Mc}) over the linear relaxation (\textbf{S.Mc}). The gaps obtained by the quadratic relaxation were considerably smaller for all test instances, and the average solution time was also smaller for 6 out of 8 groups of instances. The results show that keeping the convex quadratic terms in the objective function of the relaxation is very effective for strengthening it. Also, linearizing the complete objective function does not compensate in terms of the computational effort to solve the relaxation, because of the increase in its dimension due to the symmetric lifting.

\begin{table}[ht]
\centering
\begin{tabular}{cc|cc|cc}
\hline
&&\multicolumn{2}{c|}{Relative Duality}&\multicolumn{2}{c}{Time}\\
&&\multicolumn{2}{c|}{Gap (\%)}&\multicolumn{2}{c}{(sec)}\\
$n$&$m$& B.Mc & S.Mc& B.Mc & S.Mc\\ \hline
20&4&36.36&159.58&0.49&0.60\\
20&8&62.89&165.20&0.42&0.55\\
20&16&83.24&153.04&0.51&0.97\\
20&20&106.37&184.69&0.60&0.91\\
100&4&86.79&1938.62&4.38&1.63\\
100&20&206.75&904.29&1.01&1.59\\
100&40&212.30&596.03&1.15&1.78\\
100&80&211.51&464.98&3.41&2.61\\
\hline
\end{tabular}
\caption{Comparing relaxations for box-constrained nonconvex quadratic problems}\label{tab:tab1}
\end{table}

The next goal of our experiments was to compare the two types of disjunctive cuts proposed to strengthen the relaxation of the non-symmetric lifting $W=xy'$. The disjunctive cuts are generated by solving linear programs in cutting-plane algorithms. For details on the formulation of the Cut Generating Linear Program (CGLP) in the context of disjunctive programming,  the reader is referred to \cite{MIQCP}.

We have implemented two cutting-plane algorithms, both start by solving (\textbf{B.Mc}) and obtaining its solution $(\hat{x},\hat{y}, \hat{W})$. Let  $u^i,v^i$ be the  left- and right-singular vectors
of the matrix $\hat{M}:=\hat{W}-\hat{x}\hat{y}'$, corresponding to its $i^{th}$ largest singular-value, for all $i=1,\ldots,n.cuts$. These singular vectors are used to generate $n.cuts$ disjunctive cuts to be included in the relaxation. The process iterates until the maximum number of cuts, $max.n.cuts$,  is added to the relaxation or until no cut for the solution of the current relaxation is found. In our experiments, we set   $max.n.cuts=40$ and $n.cuts=\min\{4,\sigma_+\}$, where $\sigma_+$ is the number of non-zero singular-values of $\hat{M}$.

In the following we distinguish our two cutting-plane algorithms.

\begin{itemize}

\item \textbf{B.Mc.Disj} (Bilinear McCormick with Saxena et al.'s Disjunctive cuts):
Here we apply what is discussed in \S\ref{sec:Saxena2}, extending the idea of the disjunctive cuts proposed by Saxena et al. in \cite{MIQCP} to the bilinear case. We consider the following convex quadratic secant inequalities for (\ref{univ1}-\ref{univ2}):
\begin{eqnarray}
 u^{i'}Wv^i + ((u^{i'}x - v^{i'}y)/2)^2 \leq (l_{q^i_1}+u_{q^i_1})((u^{i'}x + v^{i'}y)/2) - l_{q^i_1} u_{q^i_1}~, \label{sec1} \\
-u^{i'}Wv^i + ((u^{i'}x + v^{i'}y)/2)^2 \leq (l_{q^i_2}+u_{q^i_2})((u^{i'}x - v^{i'}y)/2) - l_{q^i_2} u_{q^i_2}~, \label{sec2}
\end{eqnarray}
for $i=1,\ldots,n.cuts$, where  $l_{q^i_j},u_{q^i_j}$ are bounds on $q^i_j$ ($j=1,2$),  defined as in \eqref{qjdefinition}, but replacing $u,v$ with $u^i,v^i$. The bounds are derived from the box constraints on $x$ and $y$. By splitting the ranges $[l_{q^i_j},u_{q^i_j}]$ ($j=1,2$), into two intervals each, a 4-way disjunction is derived, and used to generate disjunctive cuts.

For the formulation of the CGLP, we should consider the linear constraints of (\textbf{B.Mc}), a linear relaxation of (\ref{sec1}-\ref{sec2}), and possibly more valid linear inequalities to strengthen the relaxation of $W=xy'$. Nevertheless, there is a natural trade-off between the number of constraints used to strengthen this relaxation  (and consequently, the effectiveness of the cut generated), and the computational effort to solve the CGLP. To formulate the CGLP in our algorithm, we have considered the constraints of (\textbf{B.Mc}) and the linearization of the constraints (\ref{sec1}-\ref{sec2}), obtained by replacing the convex quadratics in their left-hand side with their tangents, computed only at the solution of the last relaxation solved.
We also have considered the linearization of valid inequalities,  similar to (\ref{sec1}-\ref{sec2}), but where the singular vectors $u_i$ and $v_i$ are replaced by the unit vectors $e_i$ and $-e_i$ in corresponding dimensions, for each $i$. As we have two inequalities for each pair $(u^i,v^i)$ in (\ref{sec1}-\ref{sec2}), the use of all the unit vectors leads to the linearization of $8nm$ inequalities, which are also considered in the formulation of the CGLP.


\item \textbf{B.Mc.ExtDisj}  (Bilinear McCormick with our Extended McCormick Disjunctive cuts):
Here we apply what is discussed in \S\ref{sec:McInstead}, extending the idea under Saxena et al.'s disjunctive cuts to generate disjunctive cuts from the  inequalities (\ref{mc1p}--\ref{mc4p}).
The formulation of the CGLP takes into account the constraints of (\textbf{B.Mc}), and the linear inequalities
\begin{align}
		& \left \langle W,u^{i}v^{i'} \right \rangle\leq b^{i}_2 u^{i'}x + a^{i}_1 v^{i'}y - a^{i}_1 b^{i}_2 \label{mc1pa} \tag{$\mbox{I.1a}'$}\\
		& \left \langle W,u^{i}v^{i'} \right \rangle\leq a^{i}_2 u^{i'}x + b^{i}_1 v^{i'}y - a^{i}_2 b^{i}_1 \label{mc2pa} \tag{$\mbox{I.2a}'$}\\
		& \left \langle W,u^{i}v^{i'} \right \rangle\geq a^{i}_2 u^{i'}x + a^{i}_1 v^{i'}y - a^{i}_1 a^{i}_2  \label{mc3pa} \tag{$\mbox{I.3a}'$}\\
		& \left \langle W,u^{i}v^{i'} \right \rangle\geq b^{i}_2 u^{i'}x + b^{i}_1 v^{i'}y - b^{i}_1 b^{i}_2  \label{mc4pa} \tag{$\mbox{I.4a}'$},
\end{align}
for $i=1,\ldots,n.cuts$, where  $a_j{^i},b{_j^i}$ are bounds on $p^i_j$ ($j=1,2$),  defined as in \eqref{defining}, but replacing $u,v$ with $u^i,v^i$. The bounds are derived from the box constraints on $x$ and $y$. By splitting now, the ranges $[a_j{^i},b{_j^i}]$ ($j=1,2$), into two intervals each, another 4-way disjunction is derived, and is used to generate alternative disjunctive cuts.
\end{itemize}

Table \ref{tab:tab2} presents the average relative duality gaps for each group of instances of the same size, obtained by the cutting-plane algorithms \textbf{B.Mc.Disj} and \textbf{B.Mc.ExtDisj}, and their average computational time.

For the instances used, we see a slight superiority of the bounds obtained by the cutting-plane algorithm  \textbf{B.Mc.ExtDisj} over \textbf{B.Mc.Disj}, for 7 out of 8 groups of instances. Furthermore, the computational time of \textbf{B.Mc.ExtDisj} becomes much smaller as the size of the instances increases. The reason for the faster increase in the times for \textbf{B.Mc.Disj} is the use of the $8nm$ inequalities derived from the unit vectors, as described above. However, if those inequalities are not used, the bounds obtained by   \textbf{B.Mc.Disj} become worse than the ones obtained by \textbf{B.Mc.ExtDisj}.

\begin{table}[ht]
\centering
\begin{tabular}{cc|cc|cc}
\hline
&&\multicolumn{2}{c|}{Relative Duality}&\multicolumn{2}{c}{Time}\\
&&\multicolumn{2}{c|}{Gap (\%)}&\multicolumn{2}{c}{(sec)}\\
$n$&$m$& B.Mc.Disj & B.Mc.ExtDisj& B.Mc.Disj & B.Mc.ExtDisj\\ \hline
20&4&23.23	&19.87&	49.25&	51.55\\
20&8&46.28	&44.43&	56.74&	61.85\\
20&16&65.46	&65.18&	75.97&	73.86\\
20&20&88.18	&87.63&	107.71&	95.50\\
100&4&77.53	&70.10&	157.91&	128.16\\
100&20&199.43&	193.44&	527.68&	319.15\\
100&40&196.91&	200.60&	1676.57&	678.32\\
100&80&206.64&	205.30&	2348.45&	1587.26\\
\hline
\end{tabular}
\caption{Comparing disjunctive cuts for box-constrained nonconvex quadratic problems}\label{tab:tab2}
\end{table}

Figures \ref{plots20} and \ref{plots100} also depict comparisons of the bounds obtained with our two basic relaxations  (\textbf{S.Mc}), (\textbf{B.Mc}), and with both cutting-plane algorithms, \textbf{B.Mc.Disj}, \textbf{B.Mc.ExtDisj}. But now, we have separate results for different instance configurations. In Figure \ref{plots20}, we show results for the instances with $n=20$ and $m=4,8,16,20$. For each value of $m$, the plots on the left compare the relative duality gaps for the different values of  (rank($Q$),rank($R$)), and the plots on the right, compare them for each density of $A$. In Figure \ref{plots100} we have the same comparisons for the instances with $n=100$ and $m=4,20,40,80$.

\begin{figure}
\centering
 \hspace*{-1.5cm} 
 \includegraphics[height=21.5cm, width=16.5cm]{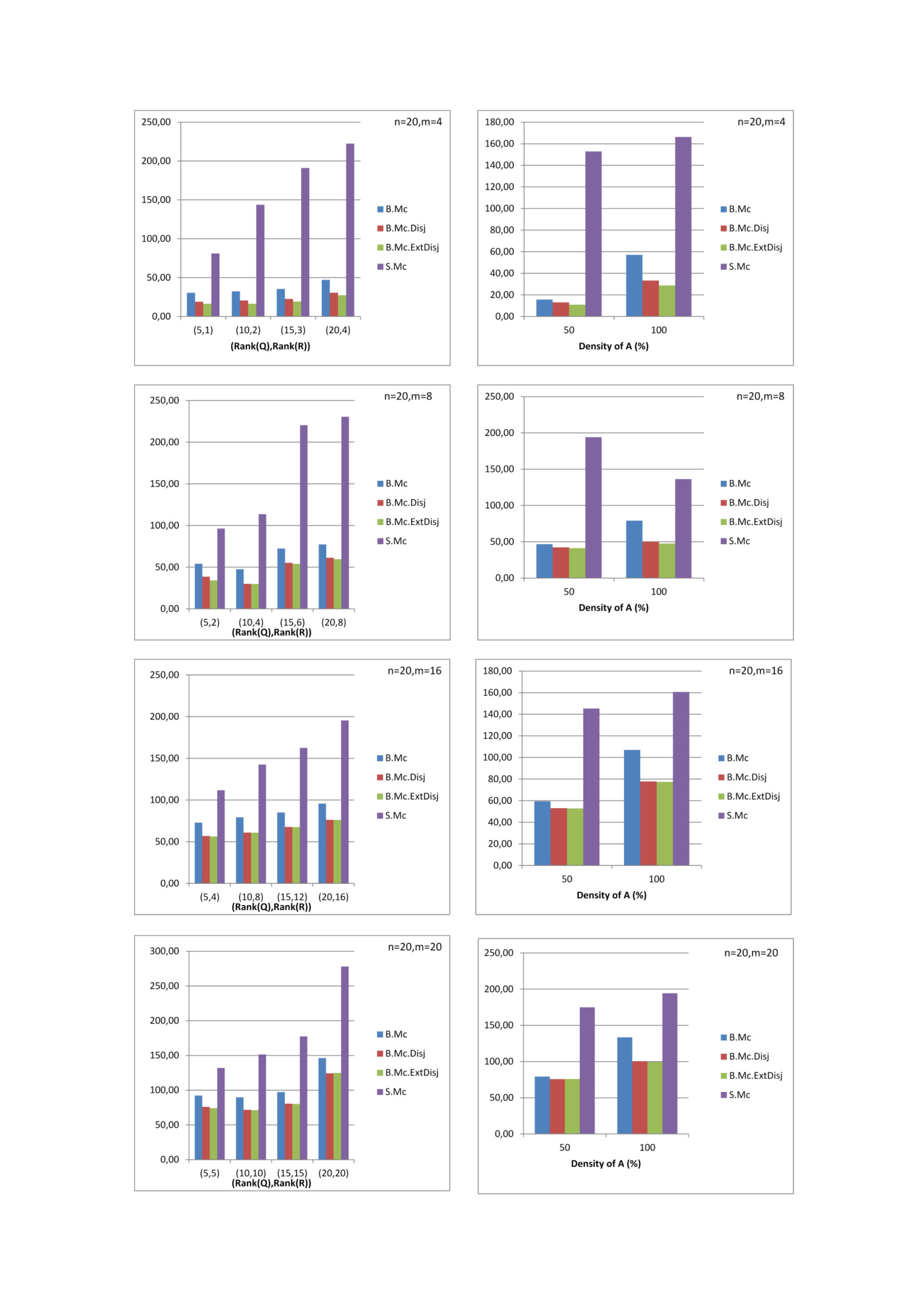}
\caption{{Relative duality gaps for box-constrained nonconvex quadratic problem ($n=20$)} \label{plots20} }
\end{figure}

\begin{figure}
\centering
 \hspace*{-1.5cm}
 \includegraphics[height=21.5cm, width=16.5cm]{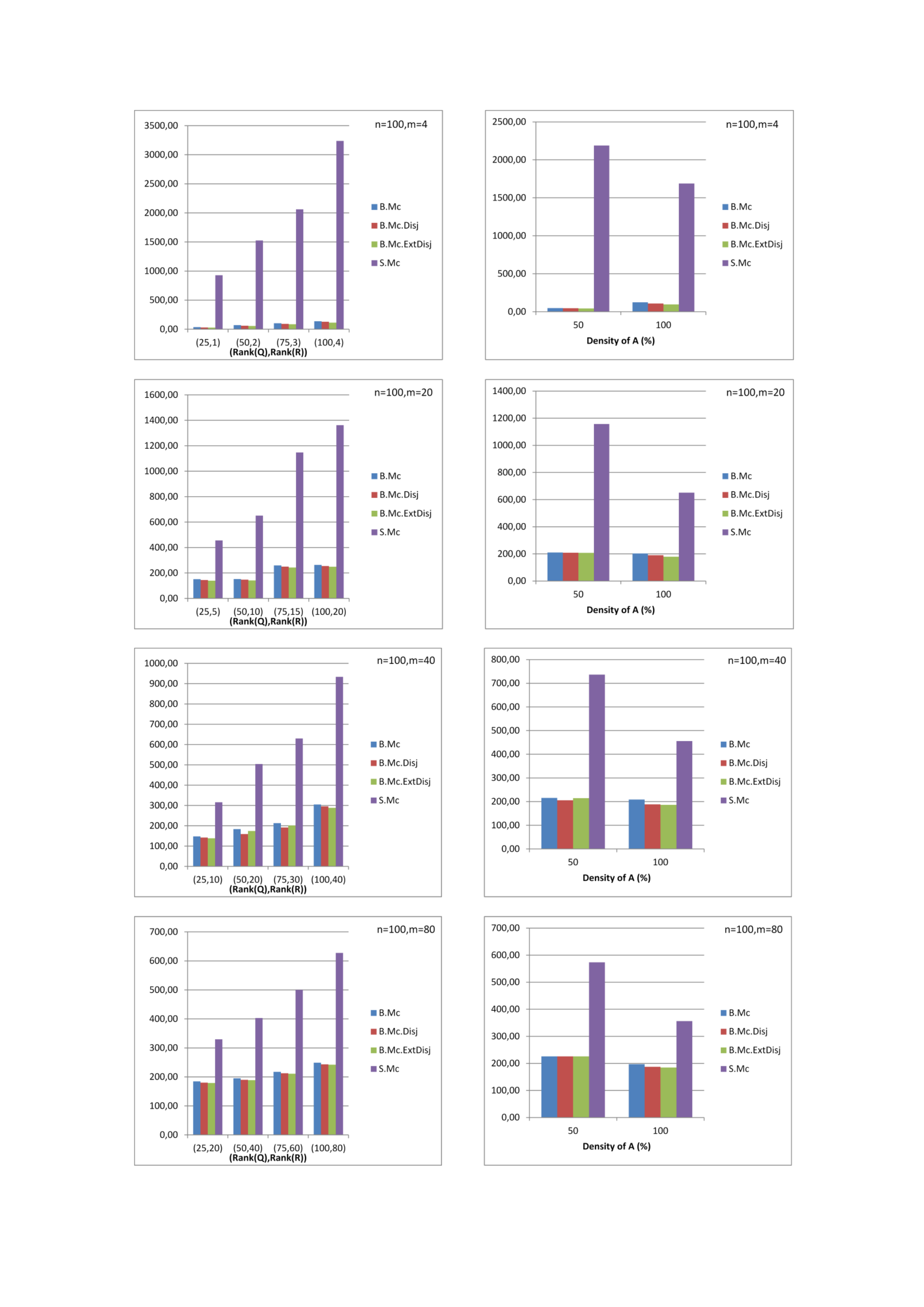}
\caption{{Relative duality gaps for box-constrained nonconvex quadratic problem ($n=100$)} \label{plots100} }
\end{figure}

From the plots in Figures \ref{plots20} and \ref{plots100}, we make the following observations.
\begin{itemize}
\item We confirm the superiority of the quadratic relaxation (\textbf{B.Mc}) over the linear relaxation (\textbf{S.Mc}) for all the instance configurations. We observe that when the ranks of the positive-semidefinite  matrices increase, this superiority also increases,  indicating that for larger ranks it is even more effective to keep the  quadratic information of the objective function in the relaxation. Also, as expected, the density of $A$ does not interfere much with the superiority of the quadratic relaxation, once its related bilinear function $x'Ay$, is linearized in both relaxations in a similar way.
\item We observe that both cutting-plane algorithms are effective in increasing the lower bound obtained by solving (\textbf{B.Mc}), for all instance configurations. In general, the improvement in the lower bounds is not influenced by the rank of the square matrices $Q$ and $R$, as the disjunctive cuts act to strengthen the relaxation of the non-symmetric lifting $W=xy'$, and therefore their effectiveness is more sensitive to modifications on the non-symmetric matrix $A$. As expected, the cuts are  more effective when $A$ is 100\% dense. In this case, more terms in the bilinear function are being relaxed, and the disjunctive cuts have more opportunity to act.
\item We confirm a slight superiority of the lower bounds obtained by the cutting-plane algorithm  \textbf{B.Mc.ExtDisj} over \textbf{B.Mc.Disj}. Again, this superiority is not influenced by the rank of the square matrices, but when $A$ is fully dense, as the cuts get more effective,  the superiority of  \textbf{B.Mc.ExtDisj} becomes more evident.
\end{itemize}

Finally, after observing the superiority of the cutting-plane algorithm \textbf{B.Mc.ExtDisj} over \textbf{B.Mc.Disj}, we did one last experiment, where we did not limit the number of cuts added in the model, but instead we limited the computational time to 3600 seconds. We ran these tests only for  instances where $A$ has density of 100\%. The goal is to see how much the algorithm can tighten the bound in this favorable case if more time is given to the algorithm. Table \ref{tab:tab3} shows average results for these instances, specifically, the duality gap given by the relaxation  (\textbf{B.Mc}), the duality gap given by \textbf{B.Mc.ExtDisj} after 3600 seconds of execution, the number of cuts added to the relaxation, and the gap closed by the disjunctive cuts, computed by
$$\mbox{Gap closed}:=\frac{\bar{z}-lb(\textbf{B.Mc.ExtDisj})}{\bar{z} - lb(\textbf{B.Mc})}\times 100\%.$$

\begin{table}[ht]
\centering
\begin{tabular}{cc|cc|cc}
\hline
&&\multicolumn{2}{c|}{Relative Duality}&\multicolumn{1}{c}{Number}&\multicolumn{1}{c}{Gap}\\
&&\multicolumn{2}{c|}{Gap (\%)}&\multicolumn{1}{c}{of cuts}&\multicolumn{1}{c}{closed}\\
$n$&$m$& B.Mc & B.Mc.ExtDisj& added& (\%)  \\ \hline
20&4&57.07&	11.81	&851.75	&80.48\\
20&8&79.15&	17.45	&736.00	&78.25\\
20&16&106.97&	49.04&	456.00	&54.97\\
20&20&133.43&	75.30	&343.00	&44.02\\
100&4&125.20&	71.64	&296.00	&46.92\\
100&20&202.95&	168.63&	166.00	&16.97\\
100&40&208.76&	180.99&	110.00&	13.56\\
100&80&197.01	&182.82	&66.00	&7.21\\
\hline
\end{tabular}
\caption{Effect of disjunctive cuts in 3600 seconds, when $A$ has density of 100\%}\label{tab:tab3}
\end{table}

From the results in Table \ref{tab:tab3}, we see that the cuts get more expensive to calculate, and the subproblems become more expensive to solve as the size of the instances increases, and therefore the number of cuts added within the time limit decreases. On the other hand, we observed that for the great majority of instances, the algorithm stopped only at the time limit, showing that all cuts generated were effective to cut off the current solution of the relaxation. Finally, we see that when a large number of cuts is added, as for the smallest instances, the reduction in the duality gap is significant.

%

\section{Further analysis}
\subsection{Saxena et al.'s disjunctive cuts: bilinear vs. symmetric approach}

In this section, we present a comparative analysis between the non-convex valid inequalities that generate Saxena et al.'s disjunctive cuts, considering both approaches discussed in this paper: the standard symmetric approach, where we use the symmetrization in \eqref{symmetrize}, and the alternative bilinear approach presented in \S\ref{sec:Saxena2}. The idea is to compare the strength of the inequalities, when derived from solutions of equal quality, to relaxations  (\textbf{S.Mc}) and  (\textbf{B.Mc}).

\begin{theorem}
Let $\hat{x},\hat{y},\hat{W}$ be a solution to (\textbf{B.Mc}).
Let  $u,v$ be the  left- and right-singular vectors
of the matrix $\hat{W}-\hat{x}\hat{y}'$, corresponding to one of its singular-values $\sigma$, and assume that $\sigma$ is positive. Define
\begin{align}\label{eigenvectorsym}
		z:=\frac{1}{\sqrt{2}}\left(\begin{array}{c}
								         u \\
									   	   v
										   \end{array}\right)~.
\end{align}			

If we add the positive-semidefiniteness constraints, $X-xx'\succeq 0$ and $Y-yy'\succeq 0$, to (\textbf{S.Mc}), where $X$ and $Y$ are the principal sub-matrices of $H$, formed respectively by the first $n$ and last $m$ rows and columns of $H$, and $x$ and $y$ are the sub-vectors of $h$, composed respectively by its first $n$ and last $m$ components, then, the non-convex valid inequality
\begin{equation}
\label{vineq}
\left\langle z z', H \right\rangle - (z'h)^2 \leq 0~,
\end{equation}
for (\textbf{S.Mc}), is at least as strong as the non-convex valid inequality
\begin{equation}
\label{svdconst1}
\left\langle uv', W\right\rangle - (u'x)(v'y) \leq 0~,
\end{equation}
for (\textbf{B.Mc}).

\end{theorem}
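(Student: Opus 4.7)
My plan is to unfold both inequalities using the block structure of the symmetric lifting and then show that the difference is controlled by exactly the two semidefiniteness slacks that the hypothesis forces to be nonnegative.

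First I would write the lifted matrix $H$ in block form compatible with the partition $h=(x;y)$, namely
\begin{equation*}
H=\begin{pmatrix} X & W \\ W' & Y \end{pmatrix},
\end{equation*}
where $W$ is the off-diagonal block of $H$ (so in the ideal case $W=xy'$, matching the non-symmetric lifting used by \textbf{B.Mc}). With $z = \tfrac{1}{\sqrt{2}}(u;v)$, I would then compute
\begin{equation*}
\langle zz',H\rangle = z'Hz = \tfrac{1}{2}\bigl(u'Xu + 2\,u'Wv + v'Yv\bigr), \qquad (z'h)^2 = \tfrac{1}{2}\bigl(u'x+v'y\bigr)^2.
\end{equation*}
Substituting these into \eqref{vineq} and multiplying by $2$, the inequality \eqref{vineq} becomes
\begin{equation*}
u'Xu + 2\,u'Wv + v'Yv \;\le\; (u'x)^2 + 2(u'x)(v'y) + (v'y)^2.
\end{equation*}

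Next I would rearrange this to isolate the quantity appearing in \eqref{svdconst1}:
\begin{equation*}
2\bigl[\,u'Wv - (u'x)(v'y)\,\bigr] \;\le\; \bigl[(u'x)^2 - u'Xu\bigr] + \bigl[(v'y)^2 - v'Yv\bigr].
\end{equation*}
Now I invoke the added hypothesis that $X-xx'\succeq 0$ and $Y-yy'\succeq 0$. These give $u'Xu\ge (u'x)^2$ and $v'Yv\ge (v'y)^2$, so both bracketed quantities on the right-hand side are nonpositive. Consequently, whenever \eqref{vineq} is satisfied one obtains
\begin{equation*}
u'Wv - (u'x)(v'y) \;\le\; 0,
\end{equation*}
which is exactly \eqref{svdconst1}. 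Hence the feasible set carved out in $(x,y,W)$-space by \eqref{vineq} (together with the SDP constraints on $X$ and $Y$) is contained in the feasible set of \eqref{svdconst1}, which is the desired notion of "at least as strong".

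The steps are all routine algebra and a one-line use of positive semidefiniteness, so I do not foresee a substantive obstacle. The only thing to handle with care is bookkeeping: making sure that the $X,Y,W$ used in (\textbf{S.Mc}) (as sub-blocks of $H$) are identified with the same symbols in (\textbf{B.Mc}), and that the factor $\tfrac{1}{\sqrt{2}}$ in the definition of $z$ cleanly eliminates the cross terms so that the discrepancy between the two inequalities is precisely $u'(X-xx')u + v'(Y-yy')v$, which is what makes the SDP hypothesis bite.
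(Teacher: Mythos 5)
Your proposal is correct and follows essentially the same route as the paper's proof: expand $\langle zz',H\rangle-(z'h)^2$ using the block structure of $H$, isolate $\langle uv',W\rangle-(u'x)(v'y)$, and observe that the discrepancy is exactly $\tfrac{1}{2}\bigl(u'(xx'-X)u+v'(yy'-Y)v\bigr)$, which the added semidefiniteness constraints force to be nonpositive. The paper additionally records that the lifted point $\hat H=\bigl(\begin{smallmatrix}\hat x\hat x' & \hat W\\ \hat W' & \hat y\hat y'\end{smallmatrix}\bigr)$, $\hat h=(\hat x;\hat y)$ is feasible for (\textbf{S.Mc}) with the same objective value and that the two inequalities are equally violated there, but the core implication is identical to yours.
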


\begin{proof}
Let
\begin{align}\label{matsym2}
		\hat{H}:=\left(\begin{array}{cc}
										   \hat{x}\hat{x}' & \hat{W} \\
										   \hat{W}' & \hat{y}\hat{y}'
										   \end{array}\right), \quad  \hat{h}:=\left(\begin{array}{c}
								         \hat{x} \\
									   	   \hat{y}
										\end{array}\right) ~.
\end{align}	
	
We note that $\hat{H},\hat{h}$ is a feasible solution to (\textbf{S.Mc}) with  objective function value equal to the objective function value of (\textbf{B.Mc}) at the given solution $\hat{x},\hat{y},\hat{W}$.

Moreover, it is straightforward to verify that if $\sigma$ is the $k^{th}$ largest singular value of $(\hat{W}-\hat{x}\hat{y}')$, for some $k\in\{1,\ldots,\min\{n,m\}\}$, then $\lambda = \sigma$ is the  $k^{th}$ largest eigenvalue of $(\hat{H}-\hat{h}\hat{h}')$, with corresponding eigenvector $z$.

It is then possible to see that \eqref{vineq} and \eqref{svdconst1} are non-convex valid inequalities, equally violated by the solutions $\hat{H},\hat{h}$, of (\textbf{S.Mc}), and  $\hat{x},\hat{y},\hat{W}$, of (\textbf{B.Mc}), respectively.

Let us consider the symmetrization of the objective function of (\textbf{B.Mc}), by defining
\begin{align}\label{matsym1}
		\Gamma:=\left(\begin{array}{cc}
										   Q & \frac{1}{2} A \\
										   \frac{1}{2} A' & R
										   \end{array}\right)~, \,\, a_h:=\left(\begin{array}{c}
								         a_x \\
									   	   a_y
										   \end{array}\right)~, \,\, b_h:=\left(\begin{array}{c}
								         b_x \\
									   	   b_y
										   \end{array}\right)~,
\end{align}								       														
and using the symmetric lifting
 \begin{align}\label{symlift1}
H= hh', \, \mbox{where} \, h:=\left(\begin{array}{c}
								         x \\
									   	   y
										   \end{array}\right)~.
\end{align}

Let us also consider the decomposition of $H$ as
\begin{align}\label{matsymvar}
		H:=\left(\begin{array}{cc}
										   X & W \\
										   W' & Y
										   \end{array}\right)~, \, \mbox{where} \, X:=xx' \, \mbox{and} \, Y:=yy'~.
\end{align}

Now, using \eqref{eigenvectorsym}, \eqref{symlift1}, and \eqref{matsymvar}, we may rewrite \eqref{vineq} as
\begin{equation}
\label{svdconst2}
\left\langle uu', X\right\rangle+2\left\langle uv', W\right\rangle + \left\langle vv', Y\right\rangle   - (u'x + v'y)^2 \leq 0~,
\end{equation}
or, equivalently, as
\begin{equation}
\label{svdconst3}
\begin{array}{lcl}
\left\langle uv', W\right\rangle - (u'x)(v'y) & \leq & \frac{1}{2} \left( (u'x)^2  + (v'y)^2 -\left\langle uu', X\right\rangle - \left\langle vv', Y\right\rangle \right)~\\
&= & \frac{1}{2} \left(
 u'(xx'-X)u +  v'(yy'-Y)v \right)\\
&\leq & 0~,
\end{array}
\end{equation}
when $X-xx'\succeq 0$, and  $Y-yy'\succeq 0$.

We can then conclude that the valid inequality \eqref{vineq} is at least as strong as \eqref{svdconst1}, if we employ the positive-semidefiniteness constraints.
\end{proof}

Although we have proven that the use of positive-semidefiniteness inequalities leads to non-convex constraints, which generate disjunctive Saxena et al.'s cuts in the symmetric context, at least as strong as the non-convex constraints used in the non-symmetric context, in this work, we are interested in avoiding positive-semidefiniteness  constraints. To complement our analysis, we ran some numerical experiments comparing the strength of a \emph{single disjunctive cut} computed using Saxena et al.'s idea, when applied in the non-symmetric and  symmetric contexts (not employing positive-semidefiniteness inequalities). To construct the cuts from solutions of equal quality for  (\textbf{S.Mc}) and (\textbf{B.Mc}), we considered $Q$ and $R$ as null matrices on these tests. Except for $Q$ and $R$, all the remaining data were taken from the instances considered on the experiments described in \S\ref{sec:exper}. We have computed the average gap closed by both cuts for instances with $n=20$, and fully dense matrices $A$. We note that for $Q=0$ and $R=0$, the bounds obtained by the relaxations (\textbf{S.Mc}) and (\textbf{B.Mc}) were indeed the same for every instance considered. We also note that in the symmetric approach, the cut was not able to improve the bound for any instance, while in the non-symmetric approach it was able to improve it for every instance considered. The average percentage gap closed in relation to the solution of (\textbf{B.Mc}), for $m=4,8,16,20$, was respectively, $1.8,2.9,1.9,2.0$.

\subsection{Bilinear approach: Saxena et al.'s vs. Extended McCormick disjunctive cuts}
Now, we present a comparative analysis between the valid inequalities that generate  disjunctive cuts, when considering both non-symmetric approaches discussed in this paper,  presented in \S\ref{sec:Saxena2} and \S\ref{sec:McInstead}.
\begin{theorem}
\label{bsaxvsbmc}
Let $\hat{x},\hat{y},\hat{W}$ be a solution to (\textbf{B.Mc}).
Let  $u,v$ be the  left- and right-singular vectors
of the matrix $\hat{W}-\hat{x}\hat{y}'$, corresponding to one of its singular-value $\sigma$, and assume that $\sigma$ is positive.
Let $p_1$ and $p_2$ be defined as in \eqref{defining}, and  $[a_i,b_i]$, be bounds for $p_i$ ($i=1,2$).
Summing inequalities ($\mbox{I.1}'$) and ($\mbox{I.2}'$), we obtain:
\begin{align}
		& \left \langle W,uv' \right \rangle\leq \frac{a_2+b_2}{2} p_1 + \frac{a_1+b_1}{2} p_2  - \frac{a_1 b_2+a_2 b_1}{2}~.  \label{addmc}
\end{align}
Now, consider the definitions for  $q_1$ and $q_2$, in \eqref{qjdefinition}, and the definition for $r$, in \eqref{rdefinition}. Replacing $-q_1^2$ in \eqref{univ1}, by its secant, defined in the interval derived from the bounds on $p_1$ and $p_2$, we obtain:
\begin{equation}
\begin{array}{lcl}
\label{saxmf}
		 \left \langle W,uv' \right \rangle &\leq& \displaystyle \frac{a_1+b_1+a_2+b_2}{4} p_1 ~+~  \frac{a_1+b_1+a_2+b_2}{4} p_2  \\
		 & &-~  \displaystyle \frac{a_1 b_2+ a_2 b_1 +a_1 b_1+ a_2 b_2}{4} ~-~ \displaystyle \left(\frac{p_1-p_2}{2}\right)^2 ~.		
\end{array}
\end{equation}
Then,
\begin{enumerate}
\item[($i$)] When $p_1=p_2$ (case where $n=m$, $A$ is symmetric, and $x=y$), both inequalities, \eqref{addmc} and \eqref{saxmf}, are equivalent.
\item[($ii$)] When $p_1 \neq p_2$  (general case),
\begin{enumerate}
\item if $b_1-a_1 = b_2-a_2$, constraint  \eqref{saxmf} dominates  \eqref{addmc},
\item otherwise, neither of the constraints, \eqref{addmc} or \eqref{saxmf}, dominates  the other.
\end{enumerate}
\end{enumerate}
\end{theorem}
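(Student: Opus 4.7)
The plan is to work with the difference $\Delta$ between the right-hand side of \eqref{addmc} and that of \eqref{saxmf}; since the two inequalities share the same left-hand side $\langle W,uv'\rangle$, inequality \eqref{saxmf} dominates \eqref{addmc} precisely when $\Delta \geq 0$ holds for all feasible $(p_1,p_2)$, and conversely. A direct expansion and collection of terms should yield
\[
4\Delta \;=\; (p_1-p_2)^2 + \bigl((a_2+b_2) - (a_1+b_1)\bigr)(p_1-p_2) + (a_1-a_2)(b_1-b_2),
\]
which, with the abbreviations $t := p_1-p_2$, $\alpha := a_2-a_1$, $\beta := b_2-b_1$, factors cleanly as $4\Delta = (t+\alpha)(t+\beta)$; equivalently,
\[
\Delta \;=\; \tfrac{1}{4}\bigl((p_1-p_2) + (a_2-a_1)\bigr)\bigl((p_1-p_2) + (b_2-b_1)\bigr).
\]
This single factorization will drive every case.

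For part ($i$), the hypothesis $p_1 \equiv p_2$ (arising from $n=m$, symmetric $A$, and $x=y$, which forces $u=v$) identifies the two bounding intervals, so $a_1=a_2$ and $b_1=b_2$. Then $t=\alpha=\beta=0$ identically, giving $\Delta\equiv 0$; in fact both \eqref{addmc} and \eqref{saxmf} collapse directly to the standard McCormick upper envelope $p^2 \leq (a+b)p - ab$, confirming equivalence.

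For part ($ii$)($a$), the condition $b_1-a_1 = b_2-a_2$ is exactly $\alpha = \beta$, so the factorization becomes $\Delta = \tfrac{1}{4}(t+\alpha)^2 \geq 0$, proving that \eqref{saxmf} dominates \eqref{addmc}. For part ($ii$)($b$), $\alpha \neq \beta$ means the two linear factors of $4\Delta$ vanish at distinct values of $t$, so the product changes sign; here I must exhibit feasible witnesses of both signs. The feasible range of $t=p_1-p_2$ is $[a_1-b_2,\, b_1-a_2]$, which contains both roots $-\alpha = a_1-a_2$ and $-\beta = b_1-b_2$ (a one-line check from $a_i \leq b_i$). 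Hence $\Delta$ can be driven strictly negative by choosing $t$ strictly between these two roots (so \eqref{saxmf} does not dominate \eqref{addmc}), and strictly positive by pushing $t$ to an endpoint of its feasible range (so \eqref{addmc} does not dominate \eqref{saxmf}), in the non-degenerate case $a_i<b_i$.

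The main obstacle is keeping the algebraic bookkeeping clean through the expansion leading to the factorization — in particular, the cancellations in the constant term simplify to $(a_1-a_2)(b_1-b_2)/4$, and the linear coefficients in $p_1$ and $p_2$ turn out to be negatives of one another, which is what permits the reduction to a function of $t=p_1-p_2$ alone. Once the factored form $(t+\alpha)(t+\beta)/4$ is in hand, all three cases fall out almost immediately.
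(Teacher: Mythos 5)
Your proposal is correct and takes essentially the same route as the paper: both arguments reduce the comparison to the sign of a univariate quadratic in $t=p_1-p_2$ over its feasible range $[a_1-b_2,\,b_1-a_2]$, with your factorization $4\Delta=(t+\alpha)(t+\beta)$ being an equivalent, somewhat cleaner packaging of the paper's decomposition of the difference into a secant-minus-parabola term plus the constant $-\tfrac{1}{4}(b_1-a_1)(b_2-a_2)$, and your endpoint/between-the-roots sign checks matching the paper's corner-point and midpoint evaluations. Your explicit non-degeneracy caveat $a_i<b_i$ in part ($ii$)($b$) is a point the paper leaves implicit, but otherwise the two proofs coincide.
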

\begin{proof}
The result in ($i$) can be easily verified by replacing $p_1$ and $p_2$ by $p:= p_1=p_2$ in  inequalities \eqref{addmc} and \eqref{saxmf}. They are then, both reduced to
\begin{align}
		 &\left \langle W,uu' \right \rangle  \leq (a+b) p  - ab~,  \label{addmcsym}
\end{align}
where we have the secant of $p^2$ in the right-hand side of the inequality, defined in the interval $[a,b]$,  where $a:=a_1=a_2$, and $b:=b_1=b_2$.

To analyze  the  general  case in ($ii$), we rewrite \eqref{saxmf} as
\begin{align}
		\left \langle W,uv' \right \rangle &\leq \frac{a_1+b_1+a_2+b_2}{4} p_1 + \frac{a_1+b_1+a_2+b_2}{4} p_2 \\
		&\quad -  \frac{a_1 b_2+ a_2 b_1 +a_1 b_1+ a_2 b_2}{4} - \left(\frac{p_1-p_2}{2}\right)^2 \\
		&   = \frac{a_2+b_2}{2} p_1 + \frac{a_1+b_1}{2} p_2  - \frac{a_1 b_2+a_2 b_1}{2}  \label{eqi}\\
		& \quad + \frac{(a_1-b_2)+(b_1-a_2)}{2} \left(\frac{p_1-p_2}{2}\right)    -  \frac{(a_1 - b_2)( b_1 - a_2)}{4}   - \left(\frac{p_1-p_2}{2}\right)^2  \label{secant1}\\
		& \quad - \frac{(b_1-a_1)(b_2- a_2)}{4}~. \label{eq3}
\end{align}

We note that the expression in \eqref{eqi} is equivalent to the right-hand side of  \eqref{addmc}.

When $p_1,p_2=a_1,b_2$, or $p_1,p_2=b_1,a_2$, the value of the secant in the first two terms of \eqref{secant1}
is equal to the value of the quadratic in the last term,  and therefore, the sum in (\ref{secant1}-\ref{eq3})  is negative.

When $p_1=(a_1+b_1)/2$ and $p_2=(a_2+b_2)/2$, the sum in (\ref{secant1}-\ref{eq3})  assumes its maximum value and becomes
\begin{equation*}
    \frac{(a_1-b_1+a_2-b_2)^2}{16} - \frac{(b_1-a_1)(b_2- a_2)}{4}
		~=~ \frac{((a_1-b_1)-(a_2-b_2))^2}{16}~.
\end{equation*}

Therefore, if $b_1-a_1=b_2-a_2$, the sum in (\ref{secant1}-\ref{eq3}) is
equal to zero, and constraint \eqref{saxmf}  dominates \eqref{addmc}. Otherwise, the sum in (\ref{secant1}-\ref{eq3}) is positive. In this case, neither of the constraints, \eqref{addmc} or \eqref{saxmf}, dominates  the other. \end{proof}

We note that the inequality obtained by adding ($\mbox{I.3}'$) and ($\mbox{I.4}'$) can also be compared to \eqref{univ2} with a similar analysis to the one in Theorem \ref{bsaxvsbmc}.

To complement our final analysis, we ran some numerical experiments comparing the strength of a \emph{single disjunctive cut} computed in three different ways: ($i$) using the ideas presented in  \S\ref{sec:Saxena2} (B.Mc.Disj), ($ii$) using the ideas presented in  \S\ref{sec:McInstead} (B.Mc.ExtDisj), ($iii$) mixing both ideas, i.e., considering all valid inequalities used in the two first approaches to model the CGLP that generates the disjunctive cut.  We present in Table \ref{tab:tab4}, the average gap closed in relation to the solution of (\textbf{B.Mc}), by the three disjunctive cuts, for the instances with $n=20$ and fully dense matrices $A$, that were used on the experiments described in \S\ref{sec:exper}.  We note that, although we have very similar results for the different cuts, we have different winners for each group of instances. For $m=4$, using all valid inequalities together generated the best average bound,
 for $m=8$ and $16$, the winner was the disjunctive cut described in \S\ref{sec:McInstead} (B.Mc.Disj), and for $m=20$, the winner was the disjunctive cut described in \S\ref{sec:Saxena2}  (B.Mc.ExtDisj). The results support the conclusion in Theorem \ref{bsaxvsbmc}, and also show that using the valid inequalities described in \S\ref{sec:Saxena2} and  \S\ref{sec:McInstead}  together, we can tighten the bound obtained when using the inequalities separately.

\begin{table}[ht]
\centering
\begin{tabular}{cc|ccc}
\hline
&&\multicolumn{3}{c}{Gap closed (\%)}\\
$n$&$m$&  B.Mc.Disj & B.Mc.ExtDisj & Mixed ideas \\ \hline
20&4&1.89&1.95&2.33\\	
20&8&3.00&2.52&2.61\\
20&16&2.12&2.06&2.02\\
20&20&2.09&2.10&2.06\\
\hline
\end{tabular}
\caption{Effect of a single disjunctive cut generated by different valid inequalities}\label{tab:tab4}
\end{table}

\section{Conclusions} We believe that there is a lot of potential for balancing the algebraic and structural extraction of convexity from non-convex polynomial functions. We believe that the best choice, in many circumstances, may not be one or the other, but rather a good harmonization of both viewpoints. We hope that our work provokes more in this direction.

\bibliographystyle{plain}
\bibliography{fl_mixed}

\end{document}